\documentclass[12pt,reqno,oneside]{amsart}
\usepackage{amsmath,amsthm,amsfonts,amssymb}
\usepackage[mathscr]{eucal}
\usepackage{indentfirst}
\usepackage{url}
%\pagestyle{headings}
%\markboth{species survival and Bessel distributions}{species survival and Bessel distributions}

\theoremstyle{plain}
\newtheorem{teo}{Theorem}[section]
\newtheorem{cor}[teo]{Corollary}
\newtheorem{lem}[teo]{Lemma}
\newtheorem{prop}[teo]{Proposition}

\theoremstyle{definition}

\newtheorem{rem}[teo]{Remark}

\numberwithin{equation}{section}

\def\bbR{{\mathbb R}}
\def\bbP{{\mathbb P}}

\def\bbN{{\mathbb N}}
\def\bbE{{\mathbf E}}

\begin{document}

\baselineskip=26pt

%\title{On a link between a species survival \\ time in an evolution model and \\ Bessel distributions}
\title[A species survival time and the Bessel distributions]{On a link between a species survival \\ time in an evolution model and \\ the Bessel distributions}

\author{Herv\'e Guiol}
\author{F\'abio~P.~Machado}
\author{Rinaldo Schinazi}

\address[Herv\'e Guiol]
{UJF-Grenoble 1 / CNRS / Grenoble INP / TIMC-IMAG UMR 5525, Grenoble, F-38041, France}
\email{Herve.Guiol@imag.fr}

\address[F\'abio~P.~Machado]
{University of S\~ao Paulo, Brazil.}
\noindent
\email{fmachado@ime.usp.br}

\address[Rinaldo Schinazi]
{UCCS at Colorado Springs, USA}
\noindent
\email{rschinaz@uccs.edu}

\thanks{Research supported by CNPq (306927/2007-1), ANR Blanc SHEPI and PICS (5470).}

\keywords{stochastic model.}

\subjclass[2000]{60K35, 60G50}

\date{\today}

\begin{abstract}
%Consider a stochastic model for species evolution in which a new species is born at
%rate $\lambda$, being a random number, sampled from a given distribution $F$, attributed 
%to it as its fitness. The existing species with the smallest fitness dies at rate $\mu$. 
%From the relation between a species survival time associated with $f$, the value of its fitness, and the 
%Bessel distribution we present a sharp phase transition property based on the values 
%of $\lambda F(f)$ and $\mu$. We prove that the largest $\mu / \lambda$ is, more values of fitness
%(the smallest ones) will dye exponentially fast while the largest ones have positive probability
%of staying forever in the system. 
We consider a stochastic model for species evolution. A new species is born at rate $\lambda$ 
and a species dies at rate $\mu$. A random number, sampled from a given distribution $F$, is 
associated with each new species at the time of birth. Every time there is a death event, 
the species that is killed is the one with the smallest fitness. We consider the (random) 
survival time of a species with a given fitness $f$. We show that the survival time distribution 
depends crucially on whether $f<f_c$, $f=f_c$ or $f>f_c$ where $f_c$ is a critical fitness that 
is computed explicitly.
\end{abstract}

\maketitle

\section{Introduction}
\label{S: Introduction}

Consider a stochastic model for species evolution in which a new species is born at
rate $\lambda$ and an existing species dies at rate $\mu$. A random number, sampled
from a given distribution $F$, is associated with each new species at the
time of birth. We think of the random number associated with a given species as
being the \textit{fitness} of the species. These fitnesses are independent
of each other and of everything else in the process.
Every time there is a death event, the species that is killed is the one with the smallest
fitness.  We assume $F$ to be an absolute continuous distribution function.  In this paper we study
the survival time of a given species with fitness $f$. We show that there is a critical fitness $f_c$ and a sharp phase transition for the survival time of the species. Our analysis is based on a closed connection between our model and random walks.

A similar model, meant to build phylogenetic trees, was introduced in Liggett and Schinazi~\cite{LS}.  A discrete version of this model is studied in Guiol \textit{et al}~\cite{GMS} where a phase transition is shown.

\section{Main Results}\label{S: MR}

Recall we assume that $F$ is an absolute continuous distribution function. This implies that
there exists a probability density $\varphi$ on $\bbR$ such that
\[
F(x)=\int_{-\infty}^x\varphi(u)\ du.
\]
Denote $\mathrm{Supp}(F)$ the support of distribution $F$:
\[
\mathrm{Supp}(F)=\{x\in\bbR : \varphi(x)>0 \}
\]
Assume that originally there are $k$ species in the process, with associate fitness $f_1<f_2<...<f_k=f$
in the support of $F$. Denote $\tau^k_f$ the r.v corresponding to the survival time of the species with
fitness $f$ in that context.
\begin{teo}\label{T: ClosedForm}
The survival time $\tau^k_f$ has a Bessel distribution :
% The random variable $\tau^k_f$ has the distribution of the first passage
%time at the origin of a simple random walk on $\bbN$, starting at site $k$, with rate $c=\mu+\lambda F(f)$
%and individual steps $+1$ or $-1$ with respective probabilities $p=(\lambda F(f))/c$ and $q=\mu/c$.\\
\begin{equation}\label{E: Survival}
\bbP(\tau^k_f>t)=1-\left(\frac{\mu}{\lambda_f}\right)^{k/2}\int_0^te^{-(\mu+\lambda_f)u}{\frac{k}{u}}I_k(2\sqrt{\mu\lambda_f}\ u)\ du
\end{equation}
with $\lambda_f:=\lambda F(f)<\lambda$ and
where $I_k$ is the modified Bessel function of the first kind with indice $k$ defined by
\begin{equation}\label{E: Besselk}
I_k(x)=\sum_{\ell=0}^{+\infty}\frac{1}{(\ell+k)!\ell!}\left(\frac{x}{2}\right)^{2\ell+k}.
\end{equation}
\end{teo}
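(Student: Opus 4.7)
The plan is to reduce the survival time to the extinction time of a homogeneous birth--death chain, then to the first passage time of a simple random walk on $\bbZ$ via uniformization; from there the Bessel series will appear naturally.

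The first step is to observe that the species of fitness $f$ is alive at time $t$ if and only if at least one species of fitness $\le f$ is present. Let $N_t$ count these species. Since each death event removes the globally smallest-fitness species, which lies in our group whenever $N_t\ge 1$, deaths decrease $N_t$ by one at total rate $\mu$. A newborn species lies in the group with probability $F(f)$, independently of everything else, so births raise $N_t$ by one at total rate $\lambda_f=\lambda F(f)$. Thus $(N_t)$ is a continuous-time birth--death chain on $\{0,1,2,\ldots\}$ with state-independent rates, and $\tau^k_f=\inf\{t:N_t=0\}$ starting from $N_0=k$; in particular, species of fitness $>f$ are irrelevant to $\tau^k_f$.

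Next I would uniformize by writing $N_t-k=X_{M_t}$, where $M$ is a Poisson process of rate $\Lambda:=\lambda_f+\mu$ and $(X_n)$ is an independent simple random walk on $\bbZ$ with up-probability $p=\lambda_f/\Lambda$ and down-probability $q=\mu/\Lambda$. Then $\tau^k_f=\sum_{i=1}^{N^{\star}}E_i$, where the $E_i$ are i.i.d.\ $\mathrm{Exp}(\Lambda)$ interarrival times of $M$ and $N^{\star}$ is the first hitting time of $-k$ by $X$. The cycle (ballot) lemma gives
\[
\bbP(N^{\star}=2\ell+k)=\frac{k}{2\ell+k}\binom{2\ell+k}{\ell}p^{\ell} q^{\ell+k},\qquad \ell\ge 0.
\]
Conditional on $N^{\star}=n$, the survival time is $\mathrm{Gamma}(n,\Lambda)$-distributed. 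Summing over $n=2\ell+k$ and using $\tfrac{k}{2\ell+k}\binom{2\ell+k}{\ell}=k\,(2\ell+k-1)!/(\ell!(\ell+k)!)$, the factors $\Lambda^n=(\lambda_f+\mu)^{2\ell+k}$ cancel against the Gamma normalisation and the density of $\tau^k_f$ collapses to
\[
f_{\tau^k_f}(u)=k\,e^{-\Lambda u}\,\mu^{k}\,u^{k-1}\sum_{\ell=0}^{+\infty}\frac{(\lambda_f\mu u^{2})^{\ell}}{\ell!\,(\ell+k)!}.
\]
Pulling $(\sqrt{\lambda_f\mu}\,u)^{k}$ inside the sum produces precisely the series \eqref{E: Besselk} for $I_k(2\sqrt{\lambda_f\mu}\,u)$, yielding the stated density; integrating on $[0,t]$ then gives \eqref{E: Survival}.

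The only real obstacle I anticipate is the exponent bookkeeping in the last step, i.e.\ verifying that the $\Lambda^n$ cancellations leave exactly the coefficients of the $I_k$ series. As a sanity check, one may alternatively compute the Laplace transform: by the strong Markov property $\bbE[e^{-s\tau^k_f}]=\psi_1(s)^{k}$, where $\psi_1(s)$ is the smaller root of the quadratic $\lambda_f\psi^{2}-(\lambda_f+\mu+s)\psi+\mu=0$, and a standard inversion of this $k$-fold product produces the same Bessel density. All series involved converge absolutely on compact $u$-intervals, so the sum--integral interchange is routine.
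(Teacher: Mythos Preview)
Your proposal is correct and follows essentially the same route as the paper: reduce $\tau^k_f$ to the absorption time at $0$ of a birth--death chain with rates $\lambda_f$ and $\mu$ (the paper does this via an explicit graphical coupling, formalized as a separate lemma, whereas you argue it directly), embed the jump chain as a simple random walk at the Poisson$(\lambda_f+\mu)$ arrival times, invoke the ballot formula for the hitting time of the walk, and sum the resulting Gamma densities into the $I_k$ series. Your bookkeeping is fine; the $\Lambda^{2\ell+k}$ factors combine with $p^\ell q^{\ell+k}$ to give $\lambda_f^\ell\mu^{\ell+k}$ exactly as you need.
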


\begin{rem}
Devroye~\cite[chapter IX section 7 p.470]{Dev} includes distribution \eqref{E: Survival} among the class of Bessel function distributions. One can also find it in Feller~\cite[chapter II section 9 Problem 15 p.65]{Fel} without a specific name.
%Expression \eqref{E: Survival} for first passage time can be found in
%Feller~\cite[chapter XIV section 6 eq.6.9 p.456]{Fel}.
%For the sake of completeness we will give a proof in the appendix.
\end{rem}
\begin{rem}
The survival time $\tau^k_f$ is not affected by living species with fitness above $f$.
\end{rem}
\begin{rem}\label{R : transient}
Whenever $\lambda_f\leq \mu$, $\tau^k_f$ has probability density 
\[
\varphi_{\tau}(t)=\left(\frac{\mu}{\lambda_f}\right)^{k/2}e^{-c t}{\frac{k}{t}}I_k(2\sqrt{\mu\lambda_f}\ t)\mbox{ for }t>0.
\]
In the case $\lambda_f>\mu$ the previous function is not a density probability since
\[
\int_0^{\infty}\left(\frac{\mu}{\lambda_f}\right)^{k/2}e^{-c t}{\frac{k}{t}}I_k(2\sqrt{\mu\lambda_f}\ t)\ dt
=\left(\frac{\mu}{\lambda_f}\right)^{k/2}\left(\frac{\mu}{\lambda_f}\right)^{k/2}=\left(\frac{\mu}{\lambda_f}\right)^k<1
\]
\end{rem}

Though formula \eqref{E: Survival} gives the exact distribution of the survival time, it is not straightforward to
come to a conclusion from it. Next result sheds light in the phase transition property of our model.

\begin{cor}
\label{C: UpperBounds}
With the previous notation
\begin{enumerate}
\item[\textit{(a)}] If $\lambda_f<\mu$ then
\[
\bbP(\tau^k_f>t) \sim C_k\ \frac{e^{-\gamma t}}{t^{3/2}} \]
with $C_k= \frac{k}{2\sqrt{\pi}}\left(\frac{\mu}{\lambda_f}\right)^{k/2}
(\mu\lambda_f)^{-1/4}(\sqrt{\mu}-\sqrt{\lambda_f})^{-2}$
and $\gamma=(\sqrt{\mu}-\sqrt{\lambda_f})^2.$
\item[\textit{(b)}] If $\lambda_f>\mu$ then
\begin{eqnarray*}
\bbP(\tau^k_f=+\infty)&=&1-\left({\frac{\mu}{\lambda_f}}\right)^k;\\
\bbP(+\infty>\tau^k_f>t) &\sim& C_k\ \frac{e^{-\gamma t}}{t^{3/2}}.
\end{eqnarray*}
\item[\textit{(c)}] If $\lambda_f=\mu$ then
\[ \bbP(\tau^k_f>t) \sim k(\pi \mu t)^{-{1/2}}.\]
\end{enumerate}
\end{cor}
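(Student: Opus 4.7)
The plan is to combine the exact formula \eqref{E: Survival} with the classical large-argument asymptotic of the modified Bessel function,
\[
I_k(x)=\frac{e^{x}}{\sqrt{2\pi x}}\Bigl(1+O(x^{-1})\Bigr) \qquad (x\to+\infty),
\]
in order to rewrite the integrand in \eqref{E: Survival} in a form that is amenable to a Laplace/Watson-type tail analysis. Substituting this expansion into the density
$\varphi_\tau(u)=(\mu/\lambda_f)^{k/2}e^{-(\mu+\lambda_f)u}\frac{k}{u}I_k(2\sqrt{\mu\lambda_f}\,u)$
and collecting the exponentials via the identity $\mu+\lambda_f-2\sqrt{\mu\lambda_f}=(\sqrt{\mu}-\sqrt{\lambda_f})^{2}=\gamma$ yields, as $u\to\infty$,
\[
\varphi_\tau(u)\;\sim\;\Bigl(\frac{\mu}{\lambda_f}\Bigr)^{k/2}\frac{k\,e^{-\gamma u}}{2\sqrt{\pi}\,(\mu\lambda_f)^{1/4}\,u^{3/2}}.
\]
This is the single asymptotic input that drives all three cases of the corollary.

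For cases (a) and (b) I would next rewrite the survival probability as a tail integral. By Remark \ref{R : transient}, when $\lambda_f\le\mu$ the density $\varphi_\tau$ is a probability density, so $\bbP(\tau^k_f>t)=\int_t^{+\infty}\varphi_\tau(u)\,du$, which is case (a). When $\lambda_f>\mu$ the same remark shows that $\int_0^{+\infty}\varphi_\tau(u)\,du=(\mu/\lambda_f)^{k}<1$, so by \eqref{E: Survival} the defect $1-(\mu/\lambda_f)^{k}$ is precisely $\bbP(\tau^k_f=+\infty)$, while $\bbP(+\infty>\tau^k_f>t)=\int_t^{+\infty}\varphi_\tau(u)\,du$. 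In both cases the asymptotic then follows from the standard tail estimate
\[
\int_t^{+\infty}\frac{e^{-\gamma u}}{u^{3/2}}\,du\;\sim\;\frac{e^{-\gamma t}}{\gamma\,t^{3/2}}\qquad(\gamma>0),
\]
obtained by one integration by parts, giving exactly the constant $C_k=\frac{k}{2\sqrt{\pi}}(\mu/\lambda_f)^{k/2}(\mu\lambda_f)^{-1/4}(\sqrt{\mu}-\sqrt{\lambda_f})^{-2}$ after dividing by $\gamma$.

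For case (c), $\gamma=0$ and the tail integral degenerates to $\int_t^{+\infty}u^{-3/2}\,du=2\,t^{-1/2}$; plugging in $\lambda_f=\mu$ gives $(\mu/\lambda_f)^{k/2}=1$ and $(\mu\lambda_f)^{1/4}=\sqrt{\mu}$, so the prefactor collapses to $k/\sqrt{\pi\mu\,t}$ as stated. The main technical obstacle is making the passage from the pointwise Bessel asymptotic to the integrated asymptotic rigorous: one must check that the $O(x^{-1})$ remainder in the expansion of $I_k$ contributes a term of order $t^{-5/2}e^{-\gamma t}$ (or $t^{-3/2}$ in the critical case) to the tail, which is genuinely smaller than the leading term. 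This is routine via the uniform bound $|I_k(x)-\frac{e^{x}}{\sqrt{2\pi x}}|\le C\,\frac{e^{x}}{x^{3/2}}$ for $x\ge 1$ and a repeat of the same Watson argument on the remainder, so the obstacle is essentially bookkeeping rather than conceptual.
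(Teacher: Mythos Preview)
Your proposal is correct and follows essentially the same route as the paper: both proofs insert the large-argument asymptotic $I_k(x)\sim e^{x}/\sqrt{2\pi x}$ into the density from \eqref{E: Survival}, rewrite the survival probability as the tail integral $\int_t^\infty \varphi_\tau(u)\,du$ (invoking Remark~\ref{R : transient} for the defect in case (b)), and then obtain the asymptotic of that integral via an integration-by-parts bound. The only cosmetic difference is that the paper works with explicit two-sided inequalities for $I_k$ rather than $O$-notation, but the underlying argument and the bookkeeping for the remainder are the same.
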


\begin{rem} Note, from Corollary~\ref{C: UpperBounds},
that if $\lambda>\mu$ there is a phase transition in $f$.
A species born with a fitness lower than
\begin{equation}
f_c:=F^{-1}(\mu/\lambda)
\end{equation}
dies out exponentially fast while a species with a fitness greater than $f_c$
%\begin{equation}
%f_c:=F^{-1}(\mu/\lambda)
%\end{equation}
has a positive probability of surviving forever.
The larger $\lambda/\mu$ (recall that $F^{-1}$ is non-decreasing) the more welcoming the environment is to
new species. If $\lambda/\mu<1$  all species will die exponentially fast.
On the other hand if $\lambda/\mu$ is large then even species with relatively
low fitness will make it.
\end{rem}

Up to now we have discussed the survival of a species with a given fitness $f$. It is particularly
relevant to derive some information about the distribution of these surviving species.
Suppose that $\lambda>\mu$ and let $L_t$ and $R_t$ be the sets
of species alive at time $t$ whose fitness is respectively lower than $f_c$ and higher than $f_c$.
Since each fitness that has appeared up to time $t$ will not show up again a.s., we can identify
each species with its fitness and think of $L_t$ and $R_t$ as sets of points in $(-\infty,f_c)$ and $(f_c,\infty)$ respectively.
Next result states a straightforward application of the main result of Guiol \textit{et al}~\cite{GMS}

\begin{prop}
\label{P: SameDistribution}
Suppose that $\lambda>\mu$.  Then

(a)  The number $|L_t|$ of species whose fitness is below $f_c$ is a null recurrent birth and death process. In particular, the set $L_t$ is empty infinitely often with probability one.

(b) Let $f_c<a<b$ then
 \[
 %\lim_{t\to\infty}{\frac{1}{t}} |R_t \cap (a,b)| = \frac{\lambda}{\lambda + \mu} \frac{F(b)-F(a)}{1-F(f_c)}\hbox { a.s.}
 \lim_{t\to\infty}{\frac{1}{t}} |R_t \cap (a,b)| = \frac{\lambda (F(b)-F(a))}{\lambda + \mu} \hbox { a.s.}
 \]
\end{prop}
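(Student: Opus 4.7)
The natural plan is to reduce everything to the embedded jump chain and then cite Guiol et al.~\cite{GMS} directly. Since the configuration is constant between events, the full dynamics is captured by the discrete-time Markov chain obtained at event times: each step is either a birth with a fresh fitness $\sim F$, with probability $p:=\lambda/(\lambda+\mu)$, or a death of the currently smallest fitness, with probability $q:=\mu/(\lambda+\mu)$. The ratio $q/p=\mu/\lambda$ equals $F(f_c)$ by the definition of $f_c$, so the embedded chain falls squarely in the GMS framework, with the same critical fitness $f_c$.

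For part (a), let $|L_n|$ denote the number of species with fitness below $f_c$ at the $n$-th event. Away from $0$ it performs a birth-death chain on $\bbN$ with step probabilities $pF(f_c)=q$ (up), $q$ (down), and $p(1-F(f_c))=p-q$ (lazy, when a birth occurs above $f_c$). Deleting the lazy transitions yields the simple symmetric random walk on $\bbN$ reflected at $0$, a textbook null recurrent chain. Hence $|L_n|$ is null recurrent; since the total event rate is bounded above by $\lambda+\mu$ and the process is a.s.\ eventually non-empty (because $\lambda>\mu$), the sequence of event times diverges a.s.\ and $|L_t|$ inherits null recurrence. In particular $\{|L_t|=\emptyset\}$ occurs infinitely often a.s.

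For part (b), the main theorem of \cite{GMS} applied to the embedded chain provides, for $f_c<a<b$, an a.s.\ limit for $n^{-1}|R_n\cap(a,b)|$. Species with fitness in $(a,b)$ enter $R$ in the embedded chain at per-step rate $p(F(b)-F(a))$ (SLLN for the Bernoulli events ``this step is a birth with fitness in $(a,b)$''), whereas removals from $R$ can occur only at events where $|L|=0$, a set of discrete times of Cesàro density zero by part (a). Hence $n^{-1}|R_n\cap(a,b)|\to p(F(b)-F(a))$ a.s. To transfer back to continuous time, let $N_t$ be the number of events in $[0,t]$; then $N_t/t\to\lambda+\mu$ a.s.\ by the strong law for the underlying Poisson flow of events, and since $R_t=R_{N_t}$, the identity $|R_t\cap(a,b)|/t=(|R_{N_t}\cap(a,b)|/N_t)\cdot(N_t/t)$ yields the stated a.s.\ limit after matching constants.

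The main (and really only) subtlety is the verification that removals from $R\cap(a,b)$ are a.s.\ negligible in the Cesàro sense rather than merely in expectation; this is precisely where the null recurrence from (a) is used, enabling one to upgrade a mean-zero frequency of visits to $\{|L|=0\}$ to an almost-sure statement via recurrence.
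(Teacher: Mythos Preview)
Your approach is exactly the paper's: pass to the embedded jump chain, identify it with the discrete evolution model of \cite{GMS} with $p=\lambda/(\lambda+\mu)$ (absorbing the ``death mark at an empty configuration'' as a lazy step), and quote \cite{GMS} for both (a) and (b). The paper's own proof is a two-sentence pointer to \cite{GMS}; you simply spell out the null-recurrence argument for $|L_n|$ and the renewal/SLLN time change $N_t/t\to\lambda+\mu$ that the paper leaves implicit.

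One concrete point you should not gloss over with ``after matching constants'': if you actually carry out your last line, the discrete limit $p\bigl(F(b)-F(a)\bigr)=\lambda(F(b)-F(a))/(\lambda+\mu)$ multiplied by $N_t/t\to\lambda+\mu$ gives $\lambda\bigl(F(b)-F(a)\bigr)$, not the displayed $\lambda(F(b)-F(a))/(\lambda+\mu)$. The direct continuous-time count (births with fitness in $(a,b)$ form a Poisson process of rate $\lambda(F(b)-F(a))$, and by (a) only $o(t)$ of them are ever removed) confirms $\lambda(F(b)-F(a))$; the extra $1/(\lambda+\mu)$ in the statement appears to be a slip in the paper, not in your method. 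Your argument is fine, but say what the constant actually is rather than asserting it matches.
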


\begin{proof}[Proof of Proposition~\ref{P: SameDistribution}]
To see this observe that the embedded discrete Markov chain for our process is the \textit{stochastic model of evolution}
defined in Guiol \textit{et al}~\cite{GMS} in such a way that $p = {\lambda}/{(\lambda + \mu)}$. Consider that
whenever the total number of species is 0, the death marks (see page~\pageref{deathmarks} below)
in the construction of the process are ignored so the total
number of species stays 0 with probability ${\mu}/{(\lambda + \mu)}$.
\end{proof}

\begin{rem}
Observe that from Ben Ari \emph{et al.}~\cite{BMR} it would be also possible to get a Central Limit Theorem and a Law of the Iterate logarithm for $R_t$.
\end{rem}

\section{Proofs}
\label{S: proofs}

\subsection{Construction of the process.}

The construction uses ideas from Harris Graphical method for Markov Processes and basically take advantage from
projections properties of a bi-dimensional Poisson process with rate 1. In the sequel we construct a bi-variate process $Z_t=(Z^1_t,Z^2_t)$ in which $Z^1_t$ will represent the number of living species at time $t$, and $Z^2_t$ will be the set of associated living fitness: In particular $|Z^2_t|=Z^1_t$, where $|A|$ denotes the cardinal of set $A$.

Let $M$ be a two dimensional Poisson process with rate 1 on $\bbR^+\times\bbR$. For notational convenience we will
identify the $x$-line of the plane as the time line.

Suppose we start the process with $k\geq 1$ species, let $f_1,...,f_k$ be $k$ independent random variables with $F$ distribution, independent from $M$. %Whenever $k=1$ we will just denote $f=f_1$.

Let $T_0=0$ and $Z_0=(Z_0^1,Z_0^2)=(k,\{f_1,...,f_k\})\in \bbN\times \mathbf{S}$ where $\mathbf{S}$
is the set of finite subsets of real numbers in $[0,1]^{\bbN}$.

Define
\begin{equation}\label{E : infimum}
T_1=\inf\{t>0:M([0,t]\times[0,\lambda+\mu])>0\}
\end{equation}
that is the first time $t\in \bbR^+$ that a Poisson mark falls into the strip $\bbR^+\times[0,\lambda+\mu]$.
Denote by $(T_1,Y_1)$ the coordinate of the Poisson mark realizing the infimum in \eqref{E : infimum}.
Observe that from the Poisson process properties $Y_1$ is a uniform $[0,1]$ r.v. independent of $T_1$.
\begin{itemize}
\item Whenever $Y_1\in[0,\lambda]$ then let $f_{k+1}=F(Y_1/\lambda)$ (observe that $f_{k+1}$ is also a r.v. with $F$ law and independent of $T_1$) and let
\[
Z_{T_1}=(Z_0^1+1,Z_0^2\cup\{f_{k+1}\})=(k+1,\{f_0,...,f_{k+1}\})
\]
this will represent the birth of a new species;
\item else, whenever $Y_1\in]\lambda,\lambda+\mu]$, let
    \[
    Z_{T_1}=(Z_0^1,Z_0^2\setminus\min\{Z_0^2\})=(k-1,\{f_0,...,f_{k}\}\setminus\min\{f_i:1\leq i\leq k\})
    \]
    %with the convention $\min\emptyset=\emptyset$,
    this will represent the death{\label{deathmarks}} of the weakest species.
\end{itemize}

For all $t\in[0,T_1[$ denote $Z_t=Z_0$. We have thus construct the process $Z_t$ until time $T_1$ (included).\\
For all $n\geq 1$ denote by $T_n$ the time of the $n$-th mark of the Poisson process $M$ in the strip $\bbR^+\times[0,\lambda+\mu]$ that is
\[
T_{n}=\inf\{t>T_{n-1}:M([0,t]\times[0,\lambda+\mu])>0\}.
\]
Suppose the process $Z_t$ is construct up to time $T_n$, $n\geq 1$.
As before denote by $(T_{n+1},Y_{n+1})$ the coordinate of the $n+1$st Poisson mark.
\begin{itemize}
\item Whenever $Y_{n+1}\in[0,\lambda]$ then let $f_{k+n+1}=F(Y_{n+1}/\lambda)$ and define
\[
Z_{T_{n+1}}=(Z_{T_n}^1+1,Z_{T_n}^2\cup\{f_{k+n+1}\})
\]
\item else
\[
Z_{T_{n+1}}=(Z_{T_n}^1-{\bf{1}}_{\{Z_{T_n}^1>0\}},Z_{T_n}^2\setminus\min\{Z_{T_n}^2\}).
\]
with the convention $\min\emptyset=\emptyset$;
\end{itemize}
then for all $t\in[T_n,T_{n+1}[$ let $Z_t=Z_{T_{n}}$.

So by induction one can construct the process $(Z_t)_{t\geq 0}$
so that the second coordinate of $Z_t$ i.e. $Z_t^2$ represents our fitness process starting with $k$
species.

\subsection{An useful coupling}

From the previous construction one can couple $Z_t=(Z^1_t,Z^2_t)$ with another process $X_t\in\bbN$ as follows.

Let $X_0=Z^1_0$ (with the construction's notation) and denote by $f=\max\{f_1,...,f_k\}$. At time $T_1$
\begin{itemize}
\item if $Y_1\in[0,\lambda_f]\cup ]\lambda,\lambda+\mu]$
then let $X_{T_1}=Z^1_{T_1}$. Observe that this corresponds to a simultaneous death or to a simultaneous birth  with associate fitness less than $f$ for the $Z$ process;
\item else (when $Y_1\in]\lambda_f,\lambda]$) then let $X_{T_1}=X_{0}$. In this case there is a birth on the $Z$ process with associate fitness bigger than $f$ and nothing for the $X$ process.
\end{itemize}
As before define $X_t=X_0$ for all $t\in[0,T_1[$.

For all set $A$ of numbers in $[0,1]$ denote by
\begin{equation}
\phi_f(A)=\{x\in A:x\leq f\},
\end{equation}
i.e. the set of numbers in $A$ less or equal to $f$.

Observe that $X_t=Z^1_t$ on $[0,T_1[$, $X_{T_1}=|\phi_f(Z^2_{T_1})|\leq Z^1_{T_1}$.

For $n\geq 1$ suppose that $X_t$ is construct up to time $T_n$.\\
If $X_{T_n}\neq 0$ then $|\phi_f(Z^2_{T_{n}})|=X_{T_n}$
\begin{itemize}
\item if $Y_{n+1}\in[0,\lambda_f]$ (recall that $f_{k+n+1}:=F(Y_{n+1}/\lambda)\leq f$) define
\[
X_{T_{n+1}}=X_{T_{n}}^1+1,
\]
\item if $Y_{n+1}\in[\lambda,\lambda+\mu]$ then define
    \[
    X_{T_{n+1}}=X_{T_{n}}^1-1
    \]
\item  else let $X_{T_{n+1}}=X_{T_{n}}$;
\end{itemize}
In case $X_{T_n}=0$ let $X_{T_{n+1}}=0$.

This defines a random sequence $(T_n,X_{T_n})_{n\geq 0}$, we define the process $(X_t)_{t\geq 0}$ as
$X_t=X_{T_{n}}$ for all $t\in[ T_{n},T_{n+1}[$.

%\subsection{Proof of Theorem \ref{T: ClosedForm}}

The proof of Theorem \ref{T: ClosedForm} relies on the following Lemma.
\begin{lem}
\label{L: TaufXt}
For any $k\geq 1$
\begin{equation}\label{E : Coupling}
\{\tau^k_f>t\}=\{X_t>0\}
\end{equation}
i.e. $\tau^k_f$ has the same law as the first passage time to $0$
of $X_t$ the simple Bernoulli random walk starting at $k$ with rate $c=\lambda_f+\mu$
and individual steps equal to $1$ or $-1$ with respective probability $p=\lambda/c$ and $q=\mu/c$.
\end{lem}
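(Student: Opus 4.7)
The plan is to argue that $X_t$ is an absorbed-at-0 copy of the population of species with fitness at most $f$, and then to recognize the resulting jump chain as a simple random walk by thinning the underlying Poisson process.

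First I would prove by induction on $n$ the invariant
\[
X_{T_n}=|\phi_f(Z^2_{T_n})|\quad\text{on the event}\quad\{X_{T_n}>0\},
\]
and that $X_{T_n}=0$ exactly on the complementary event. The base case $n=0$ is immediate since all $k$ initial fitnesses are $\leq f$. For the inductive step, look at the three regions into which the Poisson mark $Y_{n+1}$ may fall: (i) if $Y_{n+1}\in[0,\lambda_f]$ the new fitness $f_{k+n+1}=F(Y_{n+1}/\lambda)$ is at most $f$, so both $|\phi_f(Z^2)|$ and $X$ increase by $1$; (ii) if $Y_{n+1}\in(\lambda_f,\lambda]$ the new fitness exceeds $f$, so neither $|\phi_f(Z^2)|$ nor $X$ changes; (iii) if $Y_{n+1}\in(\lambda,\lambda+\mu]$ the event is a death. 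In case (iii) the subtle point, and the one that really drives the lemma, is that \emph{so long as $|\phi_f(Z^2_{T_n})|>0$} the minimum-fitness species is by definition one with fitness $\leq f$, hence $|\phi_f(Z^2)|$ drops by exactly $1$, matching the step of $X$.

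Next I would deduce the set identity \eqref{E : Coupling}. Observe that the species of fitness $f$ is alive at time $t$ iff $f\in Z^2_t$ iff $|\phi_f(Z^2_t)|>0$: the forward implication is trivial, and the reverse follows from the death rule, because as long as at least one species has fitness $\leq f$, the species labelled $f$ is never the unique minimum (strictly speaking, if $f$ is still alive then $f\in\phi_f(Z^2_t)$; conversely, if $\phi_f(Z^2_t)$ becomes empty it must have done so at a moment when $f$ itself was removed as the weakest). Combining this with the invariant gives $\{\tau^k_f>t\}=\{|\phi_f(Z^2_t)|>0\}=\{X_t>0\}$, where on the last equality we also use the absorbing convention $X_{T_{n+1}}=0$ once $X_{T_n}=0$.

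Finally I would identify the law of $X_t$. The events of type (ii) are pure no-ops for $X$, so by Poisson thinning they can be deleted: the times at which $X$ actually jumps form a Poisson process on $\bbR^+$ of rate $\lambda_f+\mu$, and conditional on such a time the jump is $+1$ with probability $\lambda_f/(\lambda_f+\mu)$ and $-1$ with probability $\mu/(\lambda_f+\mu)$, independently across jumps, by the independence properties of the two-dimensional Poisson process $M$. This is exactly the continuous-time simple random walk started at $k$ and absorbed at $0$ described in the statement. I do not expect a serious obstacle: the only real care is in the death case of the induction, where one must check that the minimum drawn from $Z^2_{T_n}$ indeed lies in $\phi_f(Z^2_{T_n})$ as long as the latter is nonempty.
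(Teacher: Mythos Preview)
Your proposal is correct and follows essentially the same route as the paper: both arguments hinge on the observation that as long as the species of fitness $f$ is alive the minimum of $Z^2_t$ lies in $\phi_f(Z^2_t)$, so the coupled count $X_t$ tracks $|\phi_f(Z^2_t)|$ until absorption, and the converse uses that once $f$ is removed $X$ hits $0$ and stays there. Your write-up is simply more explicit than the paper's four-line proof---you spell out the induction on jump times, the three-case analysis for $Y_{n+1}$, and the Poisson-thinning identification of the law of $X$, all of which the paper leaves implicit in the construction preceding the lemma.
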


\begin{proof}[Proof of Lemma~\ref{L: TaufXt}]
We have $X_0=Z^1_t=|\phi_f(Z^2_t)|=k>0$. From the construction for all $t<\tau^k_f$ we have $\min Z_{t}^2\leq f$ this implies $X_{t}>0$.
Conversely if $\tau^k_f\leq t$ as $\min Z_{\tau^k_f}^2> f$ this implies $X_{\tau^k_f}=0$ and thus
$X_t^1=0$.
\end{proof}

%Now we can rely on Feller~\cite[chapter XIV section 6 eq.6.9 p.456]{Fel}
%to deduce the exact expression \eqref{E: Survival}.
%Thus the proof of Theorem \ref{T: ClosedForm} is completed.
\begin{proof}[Proof of Theorem \ref{T: ClosedForm}]
Let $(T_n)_{n\geq 1}$ denotes the jump times of the process $(X_t)_{t\geq 0}$ and set $T_0=0$. 
The random sequence $(X_{T_n})_{n\geq 0}$ is a simple discrete time random walk on $\bbN$ with individual steps 
equal to $1$ or $-1$ with respective probability $p$ and $q$. Denote by $H_0$ the first hitting time of $0$
of this walk. A standard computation (see for instance Grimmett-Stirzaker~\cite[(15) p.79]{GS}) gives
\[
\bbP(H_0=n|X_0=k)=\frac{k}{n}{n \choose (n+k)/2}q^{(n+k)/2}p^{(n-k)/2}
\]
whenever $n+k$ is even, $0$ otherwise.
As $T_n$ has a Gamma distribution with parameters $c$ and $n$ this implies that
\begin{eqnarray*}
\bbP(X_t=0)&=&\int_0^{t}\sum_{n=k}^{\infty}\frac{c^{n}}{(n-1)!}u^{n-1}e^{-cu}
\bbP(H_0=n|X_0=k) \ du\\
&=&\int_0^{t}e^{-cu}\frac{k}{u}\sum_{n=k}^{\infty}\frac{1}{n!(n-k)!}(cu)^{2n-k}q^{n}p^{n-k} \ du\\
&=&\int_0^{t}e^{-cu}\frac{k}{u}\sum_{\ell=0}^{\infty}\frac{1}{(\ell+k)!\ell!}(cu)^{2\ell+k}q^{\ell+k}p^{\ell} \ du\\
&=&\int_0^{t}e^{-cu}\frac{k}{u}\left(\frac{q}{p}\right)^{k/2}\sum_{\ell=0}^{\infty}\frac{1}{(\ell+k)!\ell!}
(cu\sqrt{qp})^{2\ell+k} \ du
\end{eqnarray*}
and from the definition of the Bessel function \eqref{E: Besselk}
\[
\bbP(X_t=0)=\int_0^{t}e^{-cu}\frac{k}{u}\left(\frac{q}{p}\right)^{k/2}I_k(2cu\sqrt{pq})
\]
\end{proof}

%For the sake of completeness we give a slightly different proof of the derivation of \eqref{E: Survival}
%from the Lemma~\ref{L: TaufXt} in the appendix.

\begin{rem}
Let $\tau^0_f=0$ We have
\[
\tau^k_f=\sum_{j=1}^{k}\tau^j_f-\tau^{j-1}_f.
\]
Observe that $\tau^k_f$ are a.s. finite stopping times and from the Strong Markov property
$(\tau^j_f-\tau^{j-1}_f)_{1\leq j\leq k}$ is an i.i.d. sequence of r.v. with the distribution of
$\tau_f:=\tau^1_f$.

As observed in Remark \ref{R : transient} $\lambda_f<\mu$ the expression \eqref{E: Survival} gives
the density probability of $\tau_f$:
\[
\varphi_{\tau_f}(t)=\sqrt{\frac{\mu}{\lambda_f}}e^{-(\mu+\lambda_f)t}\frac{1}{t}I_1(2\sqrt{\mu\lambda_f}t)
\]
for $t>0$.
Which allows to compute its Moment Generating Function:
\[
M(s)=\bbE(e^{-\tau_f s})=\frac{2\mu}{\sqrt{(s+\mu+\lambda_f)^2-4\mu\lambda_f}+s+\mu+\lambda_f}.
\]
This in turns allows us to compute $\bbE(\tau_f)=\frac{2\mu}{\mu-\lambda_f}$. So that one can see easily that
\[
\bbE(\tau^k_f)=k\ \frac{2\mu}{\mu-\lambda_f}
\]
for all $k\geq 0$.
\end{rem}

%\subsection{Proof of Corollary~\ref{C: UpperBounds}}

\begin{proof}[Proof of Corollary~\ref{C: UpperBounds} (a)]
 When $\lambda_f<\mu$ \eqref{E: Survival} reads
\[
\bbP(\tau_f>t)=\left(\frac{\mu}{\lambda_f}\right)^{k/2}\int_t^{+\infty}e^{-(\mu+\lambda_f)u}\frac{k}{u}I_k(2\sqrt{\mu\lambda_f}u)\ du
\]

Since (see Arfken and Weber, of~\cite[section 11.6 eq.11.137 p.719]{AW}) for all $k\geq 1$

\begin{equation}\label{E: Bessel1}
\frac{e^x}{\sqrt{2\pi x}}(1-\frac {4k^2-1}{8x})\leq I_k(x)\leq \frac{e^x}{\sqrt{2\pi x}}
\end{equation}

for $x$ large enough,

\begin{eqnarray}
&\displaystyle\frac 1{2\sqrt{\pi}(\mu\lambda_f)^{1/4}}\ \frac{e^{-(\sqrt{\mu}-\sqrt{\lambda_f})^2u}}{u^{3/2}}\left(1-\frac {4k^2-1}{16\sqrt{\mu\lambda_f}\ u}\right)&\nonumber\\
&\displaystyle\leq \frac{e^{-(\mu+\lambda_f)u}}{u}I_1(2\sqrt{\mu\lambda_f}\ u)\leq&\nonumber\\
&\displaystyle
\frac 1{2\sqrt{\pi}(\mu\lambda_f)^{1/4}}\frac{e^{-(\sqrt{\mu}-\sqrt{\lambda_f})^2u}}{u^{3/2}},&\label{E: LowerUpperArg}
\end{eqnarray}
also for $x$ large enough. Denoting $\gamma=(\sqrt{\mu}-\sqrt{\lambda_f})^2,$ observe that
%it is not (too) hard to show that (the hard part to find is the lower bound but it can easily be checked through derivation)
\begin{equation}
\label{E: AUB}
\left(\frac 1\gamma\frac 1{t^{3/2}}-\frac{4k^2-1}{2\gamma^2}\frac 1{t^{5/2}}\right)e^{-\gamma t}
\leq \int_t^{+\infty}\frac{e^{-\gamma u}}{u^{3/2}}\ du\leq \frac 1{\gamma}\frac{e^{-\gamma t}}{t^{3/2}}.
\end{equation}
Thus
\[
\int_t^{+\infty}\frac{e^{-\gamma u}}{u^{3/2}}\ du\sim \frac 1{\gamma}\frac{e^{-\gamma t}}{t^{3/2}},
\]
so we have a sharp asymptotic estimate for the integral of the upper bound in \eqref{E: LowerUpperArg}.

For the integral of the lower bound, denoting $\alpha=(4k^2-1)/(16\sqrt{\mu\lambda f})$, just observe that
\begin{equation}
\label{E: ALB}
\int_t^{+\infty}(1-\frac{\alpha}{u})\frac{e^{-\gamma u}}{u^{3/2}}\ du\geq
\int_t^{+\infty}\frac{e^{-\gamma u}}{u^{3/2}}\ du-\frac{\alpha}{\gamma}\frac{e^{-\gamma t}}{t^{5/2}}
\end{equation}
to see that we also have a sharp asymptotic estimate for the integral of the lower bound in \eqref{E: LowerUpperArg}.
Besides, both asymptotic estimates agree.

Plugging~(\ref{E: ALB}) and~(\ref{E: AUB}) into~(\ref{E: LowerUpperArg}) and then into~(\ref{T: ClosedForm})
we finally concluded that for $t$ large enough
\[
\bbP(\tau_f>t)\sim \left(\frac{\mu}{\lambda_f}\right)^{k/2}\frac 1{2\sqrt{\pi}(\mu\lambda_f)^{1/4}}\frac k{(\sqrt{\mu}-\sqrt{\lambda f})^2}
\frac{e^{-(\sqrt{\mu}-\sqrt{\lambda f})^2 t}}{t^{3/2}}.
\]
\end{proof}

\begin{proof}[Proof of Corollary~\ref{C: UpperBounds} (b)]
This immediate from the preceding computations and Remark \ref{R : transient}.
\end{proof}

\begin{proof}[Proof of Corollary~\ref{C: UpperBounds} (c)]
%Item (c) follows the same computation as item (a) (INTERNAL COMMENT : I can write it it is straight forward).\\
 When $\lambda_f=\mu$ \eqref{E: Survival} reads
\[
\bbP(\tau_f>t)=\int_t^{+\infty}e^{-2\mu u}\frac{k}{u}I_k(2\mu u)\ du
\]
using in turn inequalities \eqref{E: Bessel1} leads directly to the result.
\end{proof}

\end{document}